\newtheorem{proposition}{Proposition}[section]
\newtheorem{theorem}[proposition]{Theorem}
\newtheorem{lemma}[proposition]{Lemma}
\newtheorem{assumption}[proposition]{Assumption}
\newtheorem{remark}[proposition]{Remark}
\newtheorem{example}[proposition]{Example}
\newtheorem{question}[proposition]{Question}
\begin{document}

\title[Large Deviations for High Gaussian Minima]{Large Deviations for High Minima of Gaussian Processes with Nonnegatively Correlated Increments}
\author{Zachary Selk}
\address{\textit{Zachary Selk} Department of Mathematics, Purdue University, West Lafayette IN 47907, USA.}
\date{\today}
\subjclass[2010]{60G15,60G22, 60K25, 60G55, 60F10}

\maketitle
\begin{abstract}
In this article we prove large deviations principles for high minima of Gaussian processes with nonnegatively correlated increments on arbitrary intervals. Furthermore, we prove large deviations principles for the increments of such processes on intervals $[a,b]$ where $b-a$ is either less than the increment or twice the increment, assuming stationarity of the increments. As a chief example, we consider fractional Brownian motion and fractional Gaussian noise for $H\geq 1/2$. 
\end{abstract}

\section{Introduction}
Let $X(t)$ be a centered continuous Gaussian process with covariance function $R(s,t)=E[X(t)X(s)]$. There has been an abundance of literature on large deviations principles for the maxima of $X(t)$ on an interval $[a,b]$. See e.g. \cite{Berman-1,Berman-2,Berman-3,Berman-4,Berman-5,Berman-6,Berman-7,Kobelkov}. There has however been less work done on large deviations for the minima of $X(t)$ on an interval $[a,b]$. The problem was introduced in \cite{Gennady-High-Level-Sets}, with follow up papers considering the smooth and nonsmooth cases in \cite{Gennady-Asymptotic} and \cite{Gennady-Non-Smooth} respectively. More precisely, setting some high threshold $u>0$, we are interested in the asymptotic behavior of 
\begin{equation*}
   P\left(\min_{t\in [a,b]} X(t)>u\right),
\end{equation*}
as $u\to\infty$.
In \cite{Gennady-High-Level-Sets} it is shown that 
\begin{equation}\label{eq:LDP}
   \lim_{u\to \infty} \frac{1}{u^2}\log \left(P\left(\min_{t\in [a,b]} X(t)>u\right)\right)=-\frac{1}{2\sigma_\ast^2(a,b)},
\end{equation}
where
\begin{equation}\label{eq:optimization}
    \sigma_\ast^2(a,b):=\inf_{\mu\in \mathcal M_1([a,b])}\int_a^b\int_a^b R(s,t)\mu(ds)\mu(dt),
\end{equation}
where $\mathcal M_1$ denotes the space of Borel probability measures on $[a,b]$. This problem has applications in the structure of the high level excursion sets of Gaussian random processes and fields (see \cite{Gennady-Asymptotic,Gennady-High-Level-Sets}). It is also of interest to compute small ball probabilities. 

In addition, the optimization problem $\sigma^2_\ast(a,b)$ is of independent interest and has been generalized in \cite{Minimum-energy-measures}, e.g., by replacing $R(s,t)$ by a general symmetric kernel on some compact set $\mathcal K\subset \mathbb R^d$, and considering different classes of measures instead of $\mathcal M_1$. In \cite{Minimum-energy-measures}, several numerics are given.

In this article, we specialize to a class of processes including fractional Brownian motion and fractional Gaussian noise, which have found wide applications in many fields. For an introduction to fBm, see e.g. \cite{Nourdin-fBm}. FBm finds applications in Queueing theory - see e.g. \cite{Assaf-Queue,Delgado-Queueing,Yamnenko-Queueing}. Fbm also has applications in finance, see e.g. \cite{Oksendal-fBm,Hu-fBm,Gatheral-volatility,Gatheral-volatility-2}. Fractional Gaussian noise has applications in signal processing, see e.g. \cite{Barton-signal}.

Our contributions are as follows. First, we compute $\sigma_\ast^2(a,b)$ in the case where $X$ is a centered continuous Gaussian process with nonnegatively correlated increments on aribitrary interval $[a,b]$. Second, we compute $\sigma_\ast^2(a,b)$ in the case $X$ is the increment of a centered continuous Gaussian process with nonnegatively correlated increments and satisfying some technical conditions on interval $[a,b]$ where $b-a$ is less than the increment. Third, we compute $\sigma_\ast^2(a,b)$ in the case $X$ is the increment of a centered continuous Gaussian process with nonnegatively correlated increments and satisfying some technical conditions on the interval $[a,b]$ where $b-a$ is twice the increment. As a chief example, we compute $\sigma_\ast^2(a,b)$ for fractional Brownian motion and fractional Gaussian noise for $H\geq 1/2$.

\section*{Acknowledgements}
ZS would like to acknowledge helpful conversations with Gennady Samorodnitsky and Harsha Honnappa.

\section{Results}
In order to compute the value $\sigma_\ast^2(a,b)$, we need a technical theorem that allows us to check whether a probability measure is a minimizer in the problem \eqref{eq:optimization}. In this section $X(t)$ will be a centered continuous Gaussian process with covariance function $R(s,t)=E[X(t)X(s)]$. The following result, Theorem 4.3 in \cite{Gennady-High-Level-Sets}, provides a check for the optimal measure in the minimization problem $\sigma_\ast^2(a,b)$.
\begin{theorem}(Theorem 4.3 \cite{Gennady-High-Level-Sets})\label{theorem:check}
Let $X(t)$ be a continuous centered Gaussian process on $[a,b]$ with covariance function $R(s,t)$. Then $\mu^\ast$ is an optimal measure in problem $\sigma_\ast^2(a,b)$ defined in equation \eqref{eq:optimization} if and only if 
\begin{align*}
    \min_{u\in [a,b]}\phi(u)&:= \min_{u\in [a,b]} \int_a^b R(s,u)\mu^\ast(ds)\\
    &=\int_a^b \int_a^b R(s,t) \mu^\ast(ds)\mu^\ast(dt)\\
    &=\phi(t),
\end{align*}
for $\mu^\ast$-a.e. $t\in [a,b]$, where $$\phi(t)=\int_a^b R(s,t)\mu^\ast(ds).$$
\end{theorem}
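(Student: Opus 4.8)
The plan is to view $\sigma_\ast^2(a,b)$ as the minimization of a convex quadratic functional over the convex, weak-$\ast$ compact set $\mathcal M_1([a,b])$, and to recognize the stated condition as the associated first-order (variational-inequality) optimality criterion. Write $I(\mu):=\int_a^b\int_a^b R(s,t)\,\mu(ds)\,\mu(dt)$. Since $R$ is a covariance function, $I(\mu)=E\big[\big(\int_a^b X(t)\,\mu(dt)\big)^2\big]$, and more generally $(\alpha,\beta)\mapsto\int\int R\,d\alpha\,d\beta$ is a positive semidefinite bilinear form on signed measures; hence $\varepsilon\mapsto I\big((1-\varepsilon)\mu^\ast+\varepsilon\nu\big)$ is a quadratic in $\varepsilon$ with nonnegative leading coefficient, so it is convex. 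Continuity of $X$ on the compact interval $[a,b]$ forces $R$ to be continuous, hence bounded, on $[a,b]^2$, so $\phi(t)=\int_a^b R(s,t)\,\mu^\ast(ds)$ is continuous on $[a,b]$ (in particular its minimum is attained), all the integrals below are finite, and Fubini's theorem applies without comment.

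First I would compute the first variation. For $\nu\in\mathcal M_1([a,b])$ and $\varepsilon\in[0,1]$ put $\mu_\varepsilon:=(1-\varepsilon)\mu^\ast+\varepsilon\nu\in\mathcal M_1([a,b])$ and expand the quadratic:
\[
I(\mu_\varepsilon)=I(\mu^\ast)+2\varepsilon\Big(\int_a^b\phi\,d\nu-\int_a^b\phi\,d\mu^\ast\Big)+\varepsilon^2\int_a^b\int_a^b R\,d(\nu-\mu^\ast)\,d(\nu-\mu^\ast),
\]
where the $\varepsilon^2$ coefficient is nonnegative. Consequently $\mu^\ast$ minimizes $I$ over $\mathcal M_1([a,b])$ if and only if the linear term is nonnegative for every competitor, that is,
\[
\int_a^b\phi\,d\nu\ \ge\ \int_a^b\phi\,d\mu^\ast\qquad\text{for all }\nu\in\mathcal M_1([a,b]);
\]
necessity follows by sending $\varepsilon\downarrow0$ in $\big(I(\mu_\varepsilon)-I(\mu^\ast)\big)/\varepsilon\ge0$, and sufficiency from the displayed identity at $\varepsilon=1$ together with the nonnegative quadratic term.

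Then I would unpack this variational inequality. Testing it against $\nu=\delta_u$ gives $\phi(u)\ge\int_a^b\phi\,d\mu^\ast$ for every $u\in[a,b]$, hence $\min_{u\in[a,b]}\phi(u)\ge\int_a^b\phi\,d\mu^\ast$; the reverse inequality is automatic since an average dominates the minimum, so $\min_u\phi(u)=\int_a^b\phi\,d\mu^\ast=I(\mu^\ast)$, which is the claimed double equality. Moreover $\phi(t)-\min_u\phi(u)\ge0$ everywhere and has zero $\mu^\ast$-integral, so $\phi(t)=\min_u\phi(u)=I(\mu^\ast)$ for $\mu^\ast$-a.e.\ $t$; this is the forward implication. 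Conversely, if $\min_u\phi(u)=I(\mu^\ast)=\phi(t)$ for $\mu^\ast$-a.e.\ $t$, then $\int_a^b\phi\,d\mu^\ast=I(\mu^\ast)$ and, for any $\nu$, $\int_a^b\phi\,d\nu\ge\min_u\phi(u)=\int_a^b\phi\,d\mu^\ast$, i.e.\ the variational inequality holds, so $\mu^\ast$ is optimal by convexity.

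The main obstacle, such as it is, is the structural input — positive semidefiniteness of $R$ — which is exactly what makes the first-order condition simultaneously necessary and sufficient; everything else is bookkeeping with Fubini and the elementary fact that a nonnegative function with zero $\mu^\ast$-integral vanishes $\mu^\ast$-a.e. I would also remark that weak-$\ast$ compactness of $\mathcal M_1([a,b])$ and continuity of $I$ guarantee that optimizers exist, although that is not needed to characterize a given $\mu^\ast$; and that the resulting conditions are the exact analogues of the Frostman-type conditions characterizing equilibrium measures in classical potential theory.
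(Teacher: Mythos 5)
Your proposal is correct, but note that the paper itself does not prove this statement: it is imported verbatim as Theorem~4.3 of the cited reference, so there is no in-paper argument to compare against. Your convexity/first-variation derivation is a valid self-contained proof: the expansion of $I\bigl((1-\varepsilon)\mu^\ast+\varepsilon\nu\bigr)$ is algebraically right, positive semidefiniteness of the quadratic term makes the linear (first-order) condition both necessary and sufficient, testing against Dirac masses converts the variational inequality into the pointwise bound $\phi(u)\ge\int\phi\,d\mu^\ast$, and the ``nonnegative function with zero $\mu^\ast$-integral vanishes $\mu^\ast$-a.e.'' step correctly yields the a.e.\ equality; the converse direction, integrating $\phi\ge\min\phi$ against an arbitrary $\nu$, is likewise sound. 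This is indeed the Frostman-type equilibrium characterization, and it is essentially how such results are proved in the source literature. The only step you assert rather than justify is that sample-path continuity of $X$ forces continuity of $R$ on $[a,b]^2$; this is true but not purely topological --- it uses that for a Gaussian family almost sure convergence implies $L^2$ convergence (equivalently, a Fernique/Borell--TIS integrability bound on $\sup_{[a,b]}|X|$), and a one-line mention of that fact would close the only gap in an otherwise complete argument.
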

\begin{remark}
The measure $\mu^\ast$ in Theorem \ref{theorem:check} should be thought of as the distribution of the location of the minima under the limit $u\to \infty$. For an example of a precise statement, see \cite{Gennady-Non-Smooth} Theorem 4.1.
\end{remark}

This section will be broken up into two subsections. The first subsection will handle the case when $X$ is a centered continuous Gaussian process with nonnegatively correlated increments. The second subsection will handle the case where $X$ is the increment of a centered continuous Gaussian process with nonnegatively correlated stationary increments.

\subsection{Processes with Nonnegatively Correlated Increments}
In this subsection, we consider continuous centered Gaussian processes with nonnegatively correlated increments. 
\begin{proposition}\label{proposition:non-increment}
Let $X(t)$ be a continuous centered Gaussian process on the interval $[a,b]$ with covariance function $R(s,t)$ and $X(0)=0$ a.s. Assume that $X$ has nonnegatively correlated increments. That is, for all quadruples $(s_1,t_1,s_2,t_2)$ with $a\leq s_1\leq t_1\leq s_2\leq t_2\leq b$ we have that
$$E\left[(X(t_1)-X(s_1))(X(t_2)-X(s_2))\right]\geq 0.$$
Then, the optimal measure $\mu^\ast=\delta_a$.
\end{proposition}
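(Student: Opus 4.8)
The plan is to verify the criterion of Theorem~\ref{theorem:check} directly for the candidate $\mu^\ast=\delta_a$. For this choice one computes $\phi(t)=\int_a^b R(s,t)\,\delta_a(ds)=R(a,t)$ and $\int_a^b\int_a^b R(s,t)\,\delta_a(ds)\,\delta_a(dt)=R(a,a)$. Since $\delta_a$ is supported on the single point $a$, the requirement ``$\phi(t)=R(a,a)$ for $\mu^\ast$-a.e.\ $t$'' is just the identity $\phi(a)=R(a,a)$, which is automatic. Thus the whole proposition reduces to showing
\begin{equation*}
\min_{u\in[a,b]}\phi(u)=\min_{u\in[a,b]}R(a,u)=R(a,a),
\end{equation*}
and since $a\in[a,b]$ gives the inequality ``$\le$'' for free, the content is the pointwise bound $R(a,u)\ge R(a,a)$ for every $u\in[a,b]$.

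To obtain this bound I would rewrite the difference as a covariance of increments. For $u\ge a$, using $X(0)=0$ a.s.,
\begin{equation*}
R(a,u)-R(a,a)=E\big[X(a)\big(X(u)-X(a)\big)\big]=E\big[\big(X(a)-X(0)\big)\big(X(u)-X(a)\big)\big].
\end{equation*}
Here $X(a)-X(0)$ is the increment of $X$ over $[0,a]$ and $X(u)-X(a)$ is the increment over $[a,u]$; with $0\le a\le u$ these are adjacent, non-overlapping intervals, so the nonnegative-correlation hypothesis makes the right-hand side nonnegative. (If $u=a$ this is the trivial equality, and if $a=0$ then $X(a)=0$ a.s., so $R(a,\cdot)\equiv 0$ and the identity holds degenerately.) Combining, $\min_{u\in[a,b]}R(a,u)=R(a,a)$, and Theorem~\ref{theorem:check} then certifies that $\mu^\ast=\delta_a$ is an optimal measure.

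The calculation itself is short, so the only genuine idea is the rewriting step: one must use the normalization $X(0)=0$ to turn the endpoint value $X(a)$ into a bona fide increment $X(a)-X(0)$, after which the intervals $[0,a]$ and $[a,u]$ sit in the correct order to apply nonnegative correlation of increments. The point to be careful about---what I would flag as the main obstacle---is the placement of $0$: the argument requires $0\le a$, so that $[0,a]$ precedes $[a,u]$ and the increments hypothesis is applicable on all of $[0,b]$. This is the regime relevant to the stated examples; in particular, for fractional Brownian motion with $H\ge 1/2$ and $a\ge0$ the inequality $R(a,u)\ge R(a,a)$ is precisely the superadditivity of $x\mapsto x^{2H}$ on $[0,\infty)$.
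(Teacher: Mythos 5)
Your proposal is correct and follows essentially the same route as the paper: both verify the criterion of Theorem~\ref{theorem:check} for $\mu^\ast=\delta_a$ by writing $R(a,u)-R(a,a)=E[(X(a)-X(0))(X(u)-X(a))]$ and invoking nonnegative correlation of the adjacent increments over $[0,a]$ and $[a,u]$. Your remark that the hypothesis must really be applied on $[0,b]$ rather than just $[a,b]$ (since the increment $X(a)-X(0)$ lives on $[0,a]$) is a fair observation about a point the paper's proof passes over silently, but it does not change the argument.
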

\begin{proof}
By Theorem \ref{theorem:check}, we must show the following two things
\begin{equation*}
    \min_{u\in [a,b]}\phi(u):= \min_{u\in [a,b]} \int_a^b R(s,u)\mu^\ast(ds)=\int_a^b \int_a^b R(s,t) \mu^\ast(ds)\mu^\ast(dt)
\end{equation*}
and 
\begin{equation*}
    \min_{u\in [a,b]}\phi(u)=\phi(t),
\end{equation*}
for $t=\{a\}$. To this aim, we compute that 
\begin{equation*}
    \int_a^b R(s,t) \delta_a(ds)=R(a,t).
\end{equation*}
Taking a minimum achieves that
\begin{align*}
    \min_{t\in [a,b]}R(a,t)&=\min_{t\in [a,b]}E[X(t)X(a)]\\
    &=\min_{t\in [a,b]}E[(X(t)-X(a)+X(a))X(a)]\\
    &=\min_{t\in [a,b]}E[(X(t)-X(a))(X(a)-X(0))]+E[X^2(a)].
\end{align*}
By the assumption that the increments are nonnegatively correlated, we conclude that this minima is achieved at $t=a$. 
\end{proof}
There is a partial converse to Proposition \ref{proposition:non-increment}. 
\begin{proposition}\label{proposition:converse-1}
Let $X(t)$ be a continuous centered Gaussian process with covariance function $R(s,t)$ and $X(0)=0$ a.s. Suppose that on any interval $[a,b]$ the optimal measure $\mu^\ast=\delta_a$. Then $X$ satisfies 
\begin{equation*}
    E[(X(t)-X(a))(X(a)-X(0))]\geq 0,
\end{equation*}
for all $0\leq a\leq t$.
\end{proposition}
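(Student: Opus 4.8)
The plan is to apply the necessity (``only if'') direction of Theorem \ref{theorem:check} to the measure $\mu^\ast = \delta_a$ on a conveniently chosen interval, and then to unwind the resulting identity exactly as in the proof of Proposition \ref{proposition:non-increment}. Fix $0 \le a \le t$; if $a=t$ the claimed inequality is trivially $0 \ge 0$, so assume $a<t$ and work on the interval $[a,t]$. By hypothesis $\mu^\ast = \delta_a$ is optimal in the problem $\sigma_\ast^2(a,t)$, so the criterion of Theorem \ref{theorem:check} must hold for it.

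Next I would evaluate the two quantities appearing in that criterion. With $\mu^\ast = \delta_a$ one has $\phi(u) = \int_a^t R(s,u)\,\delta_a(ds) = R(a,u)$ and $\int_a^t \int_a^t R(s,r)\,\delta_a(ds)\,\delta_a(dr) = R(a,a)$. The second equality in Theorem \ref{theorem:check}, namely $\phi(t)=\int\!\int R\,\mu^\ast\mu^\ast$ for $\mu^\ast$-a.e.\ $t$, is then vacuous: it reduces to $\phi(a)=R(a,a)$, which always holds. The content of the criterion is therefore $\min_{u\in[a,t]} R(a,u) = R(a,a)$. In particular, taking $u=t$ gives $R(a,t) \ge R(a,a)$.

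Finally I would rewrite $R(a,t)-R(a,a)$ as a covariance of increments. Using that $X$ is centered and $X(0)=0$ a.s.,
\begin{align*}
R(a,t) - R(a,a) &= E[X(t)X(a)] - E[X(a)^2] \\
&= E[(X(t)-X(a))X(a)] \\
&= E[(X(t)-X(a))(X(a)-X(0))].
\end{align*}
Combining this with the inequality $R(a,t) \ge R(a,a)$ obtained above yields $E[(X(t)-X(a))(X(a)-X(0))] \ge 0$, which is exactly the assertion.

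There is essentially no serious obstacle: the proof is a direct application of the necessity half of Theorem \ref{theorem:check}, and the only points needing minor care are observing that the a.e.\ condition in that theorem is automatically satisfied by a point mass, and noting that $[a,t]$ is a legitimate instance of ``any interval'' so that the hypothesis delivers the inequality for every $t\ge a$.
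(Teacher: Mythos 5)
Your proof is correct and follows essentially the same route as the paper: both apply the necessity direction of Theorem \ref{theorem:check} to $\mu^\ast=\delta_a$ to get $\min_u R(a,u)=R(a,a)$, and then decompose $R(a,t)-R(a,a)$ as $E[(X(t)-X(a))(X(a)-X(0))]$ using $X(0)=0$. The only cosmetic difference is that you work directly on $[a,t]$ while the paper works on a generic $[a,b]$ and lets $t$ range over it.
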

\begin{proof}
By Theorem \ref{theorem:check}, we have that for any interval $[a,b]$
\begin{align*}
    \min_{t\in [a,b]}R(a,t)&=\min_{t\in [a,b]}E[X(t)X(a)]\\
    &=\min_{t\in [a,b]} E[(X(t)-X(a)+X(a))X(a)]\\
    &=\min_{t\in [a,b]}E[(X(t)-X(a))(X(a)-X(0))]+E[X^2(a)]\\
    &=E[X^2(a)]+\min_{t\in [a,b]}E[(X(t)-X(a))(X(a)-X(0))]\\
    &=E[X^2(a)].
\end{align*}
Therefore, we have that for $0\leq a\leq t$ that 
\begin{equation*}
    E[(X(t)-X(a))(X(a)-X(0))]\geq 0.
\end{equation*}
\end{proof}

\subsection{Increment of Processes with Nonnegatively Correlated Increments}
In this subsection we find the optimal measure for the increment process, $X_Y(t)=Y(t+h)-Y(t)$ of a continuous centered Gaussian process $Y(t)$ with stationary increments whose increment function $f_Y^h(t)=E[Y^2(t+h)]-E[Y^2(t)]$ satisfies some growth conditions which are easily numerically verified for examples chiefly including fractional Gaussian noise for $H\geq 1/2$. 

First, we need a technical lemma which will let us handle the case where $[a,b]$ is an interval such that $b-a\leq h$. 

\begin{lemma}\label{lemma:function}
Let $\rho \in C^1((0,h),\mathbb R)$ be a continuously differentiable function such that $\rho'>0$ and $\rho'$ is decreasing on the interval $(0,h)$ for some $h>0$. Then for $b\in (0,h)$ the function $\phi(t):=\rho(t)+\rho(b-t)$ has a unique extrema on $(0,b)$ at $t=b/2$, which is a maxima.
\end{lemma}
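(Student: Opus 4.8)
The plan is to compute $\phi'(t)$ directly and show it vanishes only at $t=b/2$, then determine the sign behavior to conclude this is a maximum. Differentiating, $\phi'(t)=\rho'(t)-\rho'(b-t)$. Since $t\in(0,b)$ and $b\in(0,h)$, both $t$ and $b-t$ lie in $(0,h)$, so $\rho'$ is defined at both points. At $t=b/2$ we get $\phi'(b/2)=\rho'(b/2)-\rho'(b/2)=0$, so $t=b/2$ is a critical point.

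Next I would argue uniqueness of the critical point using the monotonicity hypothesis. Suppose $\phi'(t_0)=0$ for some $t_0\in(0,b)$, i.e. $\rho'(t_0)=\rho'(b-t_0)$. If $t_0<b/2$ then $t_0<b-t_0$, and since $\rho'$ is decreasing on $(0,h)$ this would force $\rho'(t_0)>\rho'(b-t_0)$ — strictly, because $\rho'$ being decreasing together with $\rho'>0$ and $\rho\in C^1$ means $\rho'$ cannot be constant on any subinterval without contradicting... here I should be slightly careful: "decreasing" may permit flat stretches. The cleanest route is to note that if $\rho'$ were constant on an interval then $\rho'$ would fail to be strictly decreasing there, but the hypothesis only says "decreasing"; however, even under weak monotonicity, $t_0<b-t_0$ gives $\rho'(t_0)\ge\rho'(b-t_0)$, and to get a contradiction with equality I would observe that if $\rho'(t_0)=\rho'(b-t_0)$ with $t_0<b-t_0$ then $\rho'$ is constant on $[t_0,b-t_0]$; one can still have a critical point there, so to guarantee \emph{uniqueness} one genuinely needs $\rho'$ strictly decreasing. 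I would therefore read the hypothesis as strict monotonicity of $\rho'$ (which is the natural reading and is what holds in the intended applications), under which $t_0<b/2$ forces $\rho'(t_0)>\rho'(b-t_0)$, contradicting $\phi'(t_0)=0$; symmetrically for $t_0>b/2$. Hence $t=b/2$ is the unique critical point.

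Finally, to see it is a maximum: for $t\in(0,b/2)$ we have $t<b-t$, so strict monotonicity of $\rho'$ gives $\rho'(t)>\rho'(b-t)$, i.e. $\phi'(t)>0$; for $t\in(b/2,b)$ we have $t>b-t$, so $\rho'(t)<\rho'(b-t)$, i.e. $\phi'(t)<0$. Thus $\phi$ is strictly increasing on $(0,b/2)$ and strictly decreasing on $(b/2,b)$, so $t=b/2$ is a maximum, and by the sign analysis it is the only extremum in the open interval.

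\textbf{Main obstacle.} The only delicate point is the precise meaning of "$\rho'$ is decreasing" — whether it allows flat intervals. If it does, uniqueness of the critical point fails in general, so the argument needs strict monotonicity; I expect the intended hypothesis is strict, and the rest of the proof is the short sign-of-$\phi'$ computation above. No appeal to Theorem~\ref{theorem:check} is needed here; this is purely a one-variable calculus lemma feeding into the later computation of $\sigma_\ast^2(a,b)$ for the increment process.
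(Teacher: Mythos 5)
Your proposal is correct and follows essentially the same route as the paper: differentiate to get $\phi'(t)=\rho'(t)-\rho'(b-t)$, use (strict) monotonicity of $\rho'$ to force $t=b/2$ as the only critical point, and conclude it is a maximum (the paper invokes concavity of $\phi$ where you do the equivalent first-derivative sign test). Your remark that "decreasing" must be read as strictly decreasing for uniqueness is a fair observation and matches the paper's implicit reading, which is what holds in the intended application to $f_Y^h$.
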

\begin{proof}
As $\rho$ and thus $\phi$ is differentiable, we have that 
$$\phi'(t)=\rho'(t)-\rho'(b-t).$$
Therefore, $\phi$ has a local extrema if and only if
$$\rho'(t)=\rho'(b-t).$$
If $t\neq b/2$ satisfies the above, this contradicts the monotonicity of $\rho'$. Therefore $t=b/2$ is the only solution. By concavity, it is a maxima.
\end{proof}
The following lemma decomposes the covariance function of an increment process into terms involving the increment function $f_Y^h$. This will ease analysis and make useful Lemma \ref{lemma:function} in the computations to come. 
\begin{lemma}\label{lemma:covariance}
Let $Y(t):[0,\infty)\to \mathbb R$ be a continuous centered Gaussian process which has stationary increments with covariance function $R_Y(s,t)$ and variance function $V_Y(t)=R_Y(t,t)$. We also extend the variance process to all of $\mathbb R$ by imposing that $V_Y(t)=V_Y(-t)$ for all $t\in \mathbb R$. Consider the increment process $X_Y(t)=Y(t+h)-Y(t)$ for fixed $h>0$. Then $X_Y$ is a continuous centered Gaussian process with covariance function 
\begin{equation}
    \Gamma(t-s)=R(s,t)=\frac12(f_Y^h(t-s)+f_Y^h(s-t)),
\end{equation}
where we have denoted $f_Y^h(t):=V_Y(t+h)-V_Y(t).$
\end{lemma}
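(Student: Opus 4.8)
The plan is to reduce everything to polarization of the stationary-increment structure. First I would observe that $X_Y(t) = Y(t+h) - Y(t)$ is manifestly a continuous centered Gaussian process, since it is a fixed linear combination of values of the continuous centered Gaussian process $Y$; so the only real content is computing the covariance $R(s,t) = E[X_Y(s)X_Y(t)]$ and showing it depends only on $t-s$ with the claimed closed form. Expanding the product gives
\[
R(s,t) = E\big[(Y(t+h)-Y(t))(Y(s+h)-Y(s))\big] = R_Y(t+h,s+h) - R_Y(t+h,s) - R_Y(t,s+h) + R_Y(t,s).
\]

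Next I would use the standard identity, valid for any centered process with stationary increments, that expresses the covariance in terms of the variance function: for $u \le v$,
\[
E[(Y(v)-Y(u))^2] = V_Y(v-u),
\]
and hence by polarization $2R_Y(u,v) = V_Y(u) + V_Y(v) - V_Y(v-u)$ (using stationarity of increments together with $Y(0)=0$, or more carefully working with increments from a common base point so that no assumption on $Y(0)$ is needed). Substituting this into the four-term expansion, the pure $V_Y(\cdot)$ evaluations at the individual time points cancel in pairs, and one is left with a combination of $V_Y$ evaluated at the four differences $\{(t+h)-(s+h),\ (t+h)-s,\ t-(s+h),\ t-s\} = \{t-s,\ t-s+h,\ t-s-h,\ t-s\}$. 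Writing $\tau = t-s$ and using the imposed symmetry $V_Y(\cdot) = V_Y(-\cdot)$ to fold negative arguments, this collapses to an expression in $V_Y(\tau)$, $V_Y(\tau+h)$, $V_Y(\tau-h)$, which after regrouping is exactly $\tfrac12(f_Y^h(\tau) + f_Y^h(-\tau))$ with $f_Y^h(t) = V_Y(t+h) - V_Y(t)$; I would then check the two sign regimes $\tau \ge 0$ and $\tau < 0$ separately to confirm the symmetrized form $\Gamma(\tau) = \tfrac12(f_Y^h(\tau)+f_Y^h(-\tau))$ holds in both, matching the statement's $\tfrac12(f_Y^h(t-s)+f_Y^h(s-t))$.

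The only delicate point is the bookkeeping with the variance extension $V_Y(t) = V_Y(-t)$ and making sure the polarization identity is applied with the correct orientation of each increment; in particular the term $R_Y(t,s+h)$ involves the difference $t-(s+h) = \tau - h$, which can be negative even when $\tau \ge 0$, so the even extension of $V_Y$ is genuinely used there rather than being a cosmetic convention. There is no analytic obstacle — it is a finite algebraic simplification — so I expect this sign/orientation accounting to be the main (mild) obstacle, and I would present it by carrying the computation once for $\tau \ge 0$ in full and noting that $\tau < 0$ follows by the symmetry $R(s,t) = R(t,s)$.
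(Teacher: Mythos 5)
Your proposal follows essentially the same route as the paper's proof: expand the four-term covariance, apply the stationary-increments identity $2R_Y(u,v)=V_Y(u)+V_Y(v)-V_Y(u-v)$, cancel the single-point variance terms, and fold the remaining differences using the even extension of $V_Y$ to obtain $\tfrac12(f_Y^h(t-s)+f_Y^h(s-t))$. Your remark that the polarization identity implicitly needs $Y(0)=0$ (or a common base point) is a fair observation the paper glosses over, but otherwise the two arguments coincide.
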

\begin{proof}
$X_Y$ is clearly a continuous centered Gaussian process, so we only need to compute the covariance. 
\begin{align*}
    R(s,t)&=E[X_Y(s)X_Y(t)]\\
    &=E[(Y(s+h)-Y(s))(Y(t+h)-Y(t))]\\
    &=R_Y(s+h,t+h)-R_Y(s,t+h)-R_Y(t,s+h)+R_Y(s,t).
\end{align*}
By stationarity of increments, we have that 
$$R_Y(u,v)=\frac{1}{2}\left(V_Y(u)+V_Y(v)-V_Y(u-v)\right).$$
Using this expression for $R_Y$ implies that
\begin{align*}
    2R(s,t)&=V_Y(s+h)+V_Y(t+h)-V_Y(s-t)-V_Y(s)-V_Y(t+h)+V_Y(s-t-h)\\
    &~-V_Y(t)-V_Y(s+h)+V_Y(t-s-h)+V_Y(s)+V_Y(t)-V_Y(t-s)\\
    &=-V_Y(s-t)+V_Y(s-t-h)+V_Y(t-s-h)-V_Y(t-s).
\end{align*}
Recalling that we imposed $V_Y(u)=V_Y(-u)$ for $u\in \mathbb R$ concludes that
\begin{align*}
    2R(s,t)&=-V_Y(t-s)+V_Y(h+t-s)+V_Y(t-s-h)-V_Y(t-s)\\
    &=-V_Y(t-s)+V_Y(h+t-s)+V_Y(-t+s+h)-V_Y(t-s)\\
    &=f_Y^h(t-s)+f_Y^h(s-t).
\end{align*}
\end{proof}
In light of the above decomposition of $\Gamma$ into increment functions $f_Y^h$, the following lemma describes the relevant behavior of the increment function of processes in interest, and motivates our key Assumption \ref{assumption:for-first case}.
\begin{lemma}
Let $Y(t):[0,\infty)\to \mathbb R$ be a centered continuous Gaussian process with stationary nonnegatively correlated increments and covariance function $R_Y(s,t)$ and variance function $V_Y(t)=R_Y(t,t)$. Assume that $V_Y$ is differentiable on $(0,\infty)$. Also assume that $Y(0)=0$ a.s. Then the increment function $f_Y^h(t):\mathbb R\to \mathbb R$ defined by $f_Y^h(t)=V_Y(t+h)-V_Y(t)$, where we have extended $V_Y(t)=V_Y(-t)$ for $t\in (-\infty,0)$ is increasing on $\mathbb R$. 
\end{lemma}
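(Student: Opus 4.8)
Here is the plan. The whole statement will follow from two elementary properties of $V_Y$ on $[0,\infty)$, namely that $V_Y$ is nondecreasing and that $V_Y$ is convex there. Indeed, writing $\bar V(t):=V_Y(|t|)$ for the even extension in the statement, we have $f_Y^h(t)=\bar V(t+h)-\bar V(t)$, and if $\bar V$ is convex on all of $\mathbb R$ then $t\mapsto\bar V(t+h)-\bar V(t)$ is automatically nondecreasing, since the increment of a convex function over a window of fixed length $h$ is nondecreasing in its left endpoint. Moreover an even, continuous function that is convex and nondecreasing on $[0,\infty)$ is convex on $\mathbb R$: the one-sided derivatives are nondecreasing on each half-line, and at the origin the left derivative equals $-V_Y'(0^+)\le 0\le V_Y'(0^+)$, the right derivative, precisely because $V_Y$ is nondecreasing, so the only possible corner opens upward. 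Hence it suffices to prove (a) $V_Y$ is nondecreasing on $[0,\infty)$ and (b) $V_Y$ is convex on $[0,\infty)$. (One could equally differentiate $f_Y^h$ separately on the three intervals $(0,\infty)$, $(-h,0)$ and $(-\infty,-h)$ on which $\bar V$ is smooth, checking $f_Y^h{}'\ge 0$ from $V_Y'\ge 0$ and $V_Y'$ nondecreasing, and glue via continuity at $0$ and $-h$; this is the form in which the differentiability hypothesis on $V_Y$ enters.)

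As preliminary regularity, a.s.\ continuity of $Y$ on a compact interval together with the Borell--TIS (or Fernique) inequality makes $\sup|Y|$ integrable, so $Y(t)\to Y(s)$ in $L^2$ as $t\to s$; thus $V_Y$ is continuous, and since $Y(0)=0$ a.s.\ we get $V_Y(0)=0$ and continuity at the origin.

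Property (a) is immediate: for $0<r<t$, expanding $Y(t)=Y(r)+(Y(t)-Y(r))$ and using $Y(0)=0$ gives
\[
 V_Y(t)-V_Y(r)=E\big[(Y(t)-Y(r))^2\big]+2\,E\big[(Y(t)-Y(r))(Y(r)-Y(0))\big]\ \ge\ 0,
\]
the second term being nonnegative because the increments on $[0,r]$ and $[r,t]$ are nonnegatively correlated; in particular $V_Y'\ge 0$ on $(0,\infty)$. For (b) I would compare the increments of $V_Y$ over two shifted windows: for $0<r<t$ and $0<\varepsilon<r$, expand $V_Y(t+\varepsilon)-V_Y(t)$ and $V_Y(r+\varepsilon)-V_Y(r)$ as above, cancel the two equal diagonal terms $E[(Y(t+\varepsilon)-Y(t))^2]=E[(Y(r+\varepsilon)-Y(r))^2]$ by stationarity of increments, and shift the $r$-window by $t-r$ inside the remaining expectation (again using stationarity of increments) to obtain
\[
 \big(V_Y(t+\varepsilon)-V_Y(t)\big)-\big(V_Y(r+\varepsilon)-V_Y(r)\big)=2\,E\big[(Y(t-r)-Y(0))(Y(t+\varepsilon)-Y(t))\big]\ \ge\ 0,
\]
since the increments on $[0,t-r]$ and $[t,t+\varepsilon]$ are nonnegatively correlated. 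Dividing by $\varepsilon$ and letting $\varepsilon\downarrow 0$ gives $V_Y'(t)\ge V_Y'(r)$, so $V_Y'$ is nondecreasing and $V_Y$ is convex; alternatively the choice $r=t-\varepsilon$ yields $V_Y(t+\varepsilon)+V_Y(t-\varepsilon)-2V_Y(t)\ge 0$, i.e.\ midpoint convexity, and continuity upgrades this to convexity.

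Combining (a) and (b) with the reduction of the first paragraph completes the proof. The only genuine work is the chain of stationary-increment substitutions leading to the displayed identity for (b); the behavior at the gluing points $t=0$ and $t=-h$, and the fact that the even extension inherits convexity across the origin, are then handled purely by continuity together with $V_Y'\ge 0$.
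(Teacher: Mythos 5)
Your proof is correct, and its computational core coincides with the paper's: the identity you derive for convexity, $\bigl(V_Y(t+\varepsilon)-V_Y(t)\bigr)-\bigl(V_Y(r+\varepsilon)-V_Y(r)\bigr)=2E\bigl[(Y(t-r)-Y(0))(Y(t+\varepsilon)-Y(t))\bigr]\ge 0$, is (up to a stationary shift and renaming, with $\varepsilon=h$) exactly the computation the paper performs on the positive half-line, where it appears as $f_Y^h(t_2)-f_Y^h(t_1)=2E[(Y(t_2+h)-Y(t_1+h))(Y(h)-Y(0))]\ge 0$. Where you genuinely diverge is in the treatment of the negative half-line. The paper argues piecewise: on $(-h,0)$ it differentiates to get $(f_Y^h)'(t)=V_Y'(h+t)+V_Y'(-t)\ge 0$ using only monotonicity of $V_Y$, and on $(-\infty,-h)$ it uses a reflection to reduce to the positive case, saying nothing explicit about the gluing points $t=0$ and $t=-h$. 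You instead extract the underlying principle: $V_Y$ is nondecreasing and convex on $[0,\infty)$, hence its even extension $\bar V$ is convex on all of $\mathbb R$ (the only possible corner, at the origin, opens upward because $V_Y'(0^+)\ge 0$), and the windowed increment $\bar V(t+h)-\bar V(t)$ of a convex function is automatically nondecreasing in $t$. This buys a uniform argument covering all of $\mathbb R$ including the break points, and makes transparent that the lemma is nothing but ``monotone plus convex'' for the variance function; the cost is the limit $\varepsilon\downarrow 0$ (or the midpoint-convexity detour), which the paper avoids by comparing values of $f_Y^h$ directly. One small remark: like the paper's own first case, your argument delivers ``nondecreasing'' rather than strictly increasing, so you reproduce the actual content of the paper's proof, which is similarly nonstrict on the positive half-line.
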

\begin{proof}
Let $t_2>t_1>0$. Using stationary increments of $Y$ and thus the relation $V_Y(t+h)-V_Y(t)=2R_Y(t+h,h)-V_Y(h)$, we arrive at
\begin{align*}
f_Y^h(t_2)-f_Y^h(t_1)&=V_Y(t_2+h)-V_Y(t_2)-V_Y(t_1+h)+V_Y(t_1)\\
&=2R_Y(t_2+h,h)-V_Y(h)-2R_Y(t_1+h,h)+V_Y(h)\\
&=2 E[(Y(t_2+h)-Y(t_1+h))Y(h)]\\
&=2 E[(Y(t_2+h)-Y(t_1+h))(Y(h)-Y(0)]\\
&\geq 0,
\end{align*}
where the last inequality is because $Y$ has nonnegatively correlated increments.\\
Now, let $-h<t<0$. Then 
$$f_Y^h(t)=V_Y(h+t)-V_Y(-t),$$ 
where $h+t>0$ and $-t>0$. Therefore by assumption, we may differentiate $f_Y^h$ to get that
$$(f_Y^h)'(t)=V_Y'(h+t)+V_Y'(-t)>0+0=0.$$
By Proposition \ref{proposition:non-increment}, we know that $V_Y'(u)>0$ for $u>0$. \\
Finally, let $t<-h$. Then 
$$f_Y^h(t)=V_Y(-t-h)-V_Y(-t)=-f_Y^h(-t).$$
Again, differentiation shows that 
$$(f_Y^h)'(t)>0.$$
\end{proof}
In light of our decomposition of $\Gamma$ Lemma \ref{lemma:covariance} and the above lemma, the relevant assumptions are on the increment function $f_Y^h$. Therefore we state our assumptions on the process $Y$, its increment $Y(t+h)-Y(t)$ and its increment function $f_Y^h$.
\begin{assumption}\label{assumption:for-first case}
Let $Y(t):[0,\infty)\to \mathbb R$ be a centered continuous Gaussian process with stationary increments and covariance function $R_Y(s,t)$ and variance function $V_Y(t)$, where we extend $V_Y(t)=V_Y(-t)$ for $t\in (-\infty,0)$. Let $X_Y(t)=Y(t+h)-Y(t)$ denote the increment process for fixed $h>0$ with covariance function $\Gamma(t-s)=R(s,t)$. We assume that the increment function $f_Y^h(t)=V_Y(t+h)-V_Y(t)$ is $C^2((0,\infty),\mathbb R)$. We assume that $(f_Y^h)'(t)>0$ for all $t\in \mathbb R$. We also assume that for all $b\in [0,h]$ $(f_Y^h)''(t)+(f_Y^h)''(t-b)<0$, for all $t\in [0,\infty)/\{b\}$.
\end{assumption}
\begin{remark}
The last line in Assumption \ref{assumption:for-first case} is easily verified numerically. It is difficult in general to prove from the nonnegatively correlated increments of $Y$. It is true for fractional Gaussian noise. 
\end{remark}
With the above assumption and lemmas, we are able to state and prove first our result for processes satisfying Assumption \ref{assumption:for-first case} on the interval $[a,b]$ with $b-a\leq h$. Later, we will state our result on the interval $[a,b]$ with $b-a=2h$.
\begin{theorem}
Let $Y$ be a function satisfying the assumption \ref{assumption:for-first case} with increment process $X_Y(t)$. Consider the interval $[a,b]$ with $b-a\leq h$. Then the optimal measure associated to the optimization problem $\sigma_\ast^2(a,b)$ defined in \eqref{eq:optimization} is $$\mu^\ast=\frac{1}{2}\left(\delta_a+\delta_b\right).$$
\end{theorem}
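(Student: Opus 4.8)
The plan is to verify the two conditions of Theorem \ref{theorem:check} for the candidate measure $\mu^\ast=\tfrac12(\delta_a+\delta_b)$. First I would reduce everything to the interval $[0,b-a]$ by translation invariance: since $X_Y$ has stationary increments, its covariance is $R(s,t)=\Gamma(t-s)$ by Lemma \ref{lemma:covariance}, so without loss of generality we may take $a=0$ and write $\ell:=b\le h$. Then the function to study is
\begin{equation*}
    \phi(u)=\int_0^\ell R(s,u)\,\mu^\ast(ds)=\tfrac12\bigl(\Gamma(u)+\Gamma(\ell-u)\bigr),\qquad u\in[0,\ell].
\end{equation*}
The first step is to compute $\Gamma$ on the relevant range. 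For $0\le u\le \ell\le h$ we have $\Gamma(u)=\tfrac12(f_Y^h(u)+f_Y^h(-u))$, and since $f_Y^h(-u)=V_Y(h-u)-V_Y(u)$ while $f_Y^h(u)=V_Y(h+u)-V_Y(u)$ for $u\in[0,h]$, this is a concrete $C^2$ function of $u$ on $[0,\ell]$ under Assumption \ref{assumption:for-first case}. Set $\rho(u):=\Gamma(u)$ so that $\phi(u)=\tfrac12(\rho(u)+\rho(\ell-u))$, matching the shape in Lemma \ref{lemma:function} up to the harmless factor $\tfrac12$.

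The second step is to identify the minimum of $\phi$ on $[0,\ell]$. I would apply (a mild variant of) Lemma \ref{lemma:function}: the last line of Assumption \ref{assumption:for-first case}, applied with $b=\ell$ and $b=0$, gives $\rho''(u)+\rho''(u-\ell)<0$, i.e. $\phi''<0$ on the open interval, so $\phi$ is strictly concave. A strictly concave function on $[0,\ell]$ that is symmetric about $\ell/2$ attains its minimum at the endpoints $u=0$ and $u=\ell$, with equal values there; and by the symmetry $\phi(0)=\phi(\ell)$ automatically. Thus $\min_{u\in[0,\ell]}\phi(u)=\phi(0)=\phi(\ell)=\tfrac12(\Gamma(0)+\Gamma(\ell))=\tfrac12(V_X(0)+\Gamma(\ell))$ where $V_X(0)=\Gamma(0)=f_Y^h(0)$. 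This simultaneously checks the second condition of Theorem \ref{theorem:check}, namely $\min_u\phi(u)=\phi(t)$ for $t\in\{0,\ell\}=\mathrm{supp}(\mu^\ast)$, since the minimum is attained precisely at the two atoms.

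The third step is the first condition of Theorem \ref{theorem:check}: $\min_u\phi(u)=\iint R(s,t)\,\mu^\ast(ds)\mu^\ast(dt)$. The double integral against $\tfrac12(\delta_0+\delta_\ell)$ equals $\tfrac14\bigl(\Gamma(0)+2\Gamma(\ell)+\Gamma(0)\bigr)=\tfrac12\Gamma(0)+\tfrac12\Gamma(\ell)$, which is exactly the value $\phi(0)=\phi(\ell)$ computed above — so this identity holds by direct computation once Step 2 is in place. Hence both conditions of Theorem \ref{theorem:check} are met and $\mu^\ast=\tfrac12(\delta_a+\delta_b)$ is optimal.

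I expect the only real obstacle to be the bookkeeping in Step 1 — correctly expanding $\Gamma$ via $f_Y^h$ and the even extension of $V_Y$ so that the second-derivative hypothesis of Assumption \ref{assumption:for-first case} translates into strict concavity of $\phi$ on $[0,\ell]$, including a careful treatment of the point $u=\ell/2$ (where the hypothesis excludes the single point $t=b$) and of the endpoints. One must also double-check that Assumption \ref{assumption:for-first case} is invoked with the right value of $b$ (here $b=\ell=b-a\le h$, and also $b=0$) to get $\phi''<0$ on all of $(0,\ell)\setminus\{\ell/2\}$, and argue continuity of $\phi'$ to conclude strict concavity across $\ell/2$ as well; Lemma \ref{lemma:function} already packages this argument and I would cite it directly rather than reprove it.
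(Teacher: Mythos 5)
Your proposal is correct and follows essentially the same route as the paper: reduce to $[0,\ell]$ by stationarity, write $\phi(u)=\frac12(\Gamma(u)+\Gamma(\ell-u))$, use the second-derivative hypothesis of Assumption \ref{assumption:for-first case} to get concavity, and conclude that the minimum sits at the two atoms with equal value, which also gives the double-integral identity. The only substantive difference is the grouping: the paper sets $\rho(t)=\frac14\bigl(f_Y^h(t)+f_Y^h(t-b)\bigr)$ so that the assumption literally reads $\rho''<0$, $\rho'>0$, and then invokes Lemma \ref{lemma:function}; you instead take $\rho=\Gamma$ and argue $\phi''<0$ directly. That works, but your stated justification is off at one point: expanding $2\Gamma''(v)=(f_Y^h)''(v)+(f_Y^h)''(-v)$, the quantity $\Gamma''(u)+\Gamma''(\ell-u)$ contains the terms $(f_Y^h)''(-u)$ and $(f_Y^h)''(u-\ell)$ evaluated at \emph{negative} arguments, and the $b=0$ case of the assumption only yields $(f_Y^h)''(t)<0$ for $t>0$ — it says nothing about these terms, which can indeed be positive (for fractional Gaussian noise, $(f_Y^h)''>0$ on $(-h,-h/2)$). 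The repair is immediate and is exactly the paper's grouping: pair the four terms as $\bigl[(f_Y^h)''(u)+(f_Y^h)''(u-\ell)\bigr]+\bigl[(f_Y^h)''(\ell-u)+(f_Y^h)''(-u)\bigr]$ and apply the assumption with $b=\ell$ at $t=u$ and at $t=\ell-u$; with that one-line fix your argument closes.
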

\begin{proof}
By Assumption \ref{assumption:for-first case}, the process $X_Y$ is stationary. Therefore without loss of generality we may assume that $a=0$ and $b\leq h$. Recall that by Theorem \ref{theorem:check} we only have to verify that
\begin{equation*}
    \min_{u\in [0,b]} \int_0^b R(s,u) \mu^\ast(ds)=\int_0^b\int_0^b R(s,t) \mu^\ast(ds) \mu^\ast(dt),
\end{equation*}
and 
\begin{equation*}
    \min_{u\in [0,b]} \int_0^b R(s,u) \mu^\ast(ds)=\int_0^b R(s,t) \mu^\ast(ds)
\end{equation*}
for $t\in \{0,b\}$. To verify both properties we note that 
\begin{equation*}
    \phi(u):=\int_0^b R(s,u) \mu^\ast(ds)=\frac{1}{2}\left(\Gamma(u)+\Gamma(b-u)\right).
\end{equation*}
Recalling from Lemma \ref{lemma:covariance} that $\Gamma(u)=f_Y^h(u)+f_Y^h(-u)$, we may rewrite $\phi$ as
\begin{equation*}
    \phi(u)=\frac{1}{4}\left(f_Y^h(u)+f_Y^h(-u)+f_Y^h(b-u)+f_Y^h(u-b)\right).
\end{equation*}
Using the notation of Lemma \ref{lemma:function}, we write
\begin{equation*}
    \rho(t)=\frac{1}{4}(f_Y^h(u)+f_Y^h(u-b)).
\end{equation*}
By assumption, we have that $\rho'(t)>0$ and $\rho''(t)<0$ for $t\in [0,b]$. Thus Lemma \ref{lemma:function} says there is a unique extreme point of $\phi(t)$ on $[0,b]$ at $t=b/2$. Therefore 
\begin{equation*}
    \min_{u\in [0,b]} \phi(u)=\min \{\phi(0),\phi(b)\}=\phi(0)=\phi(b),
\end{equation*}
which verifies both properties. 
\end{proof}
In order to handle the case where $b-a=2h$, we need one final assumption.

\begin{assumption}\label{assumption:for-second-case}
Let $Y$ be a process satisfying Assumption \ref{assumption:for-first case} with increment process $X_Y$. We also assume that 
\begin{equation}
    C^\ast:=1+\frac{\Gamma(h)-\Gamma(2h)}{\Gamma(h)-\Gamma(0)}>0,
\end{equation}
and 
\begin{equation}
    \gamma(t)=\Gamma(t)+C^\ast\Gamma(h-t)+\Gamma(2h-t)
\end{equation}
has at most one critical point, a maxima on $(0,h)$.
\end{assumption}

Now we can state and prove our result for processes satisfying Assumption \ref{assumption:for-first case} on the interval $[a,b]$ where $b-a=2h$. 

\begin{theorem}\label{theorem:second-case}
Let $Y$ be a process satisfying Assumptions \ref{assumption:for-first case} and \ref{assumption:for-second-case} with increment process $X_Y$. Consider the interval $[a,b]$ with $b-a= 2h$. Then the optimal measure associated to the optimization problem $\sigma_\ast^2(a,b)$ defined in \eqref{eq:optimization} for $X_Y$ is $$\mu^\ast =\frac{1}{2+C^\ast}(\delta_a+C^\ast \delta_{a+h}+\delta_b).$$ \end{theorem}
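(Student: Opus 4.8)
The plan is to apply the verification criterion of Theorem~\ref{theorem:check} to the candidate measure $\mu^\ast$. Since $X_Y$ is stationary by Assumption~\ref{assumption:for-first case}, I would first translate so that $a=0$, $b=2h$, so that $\mu^\ast=\frac{1}{2+C^\ast}(\delta_0+C^\ast\delta_h+\delta_{2h})$; note this is a probability measure precisely because $C^\ast>0$. Writing $\phi(u)=\int_0^{2h}R(s,u)\,\mu^\ast(ds)$, using $R(s,u)=\Gamma(u-s)$, and recalling from Lemma~\ref{lemma:covariance} that $\Gamma$ is even, I obtain
\begin{equation*}
\phi(u)=\frac{1}{2+C^\ast}\bigl(\Gamma(u)+C^\ast\Gamma(h-u)+\Gamma(2h-u)\bigr).
\end{equation*}
By Theorem~\ref{theorem:check} it then suffices to show two things: that $\phi$ attains its minimum over $[0,2h]$ at each of the atoms $0,h,2h$; and that this minimum value equals the double integral $\int_0^{2h}\!\int_0^{2h}R(s,t)\,\mu^\ast(ds)\,\mu^\ast(dt)$. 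The second point is automatic once the first is known, since that double integral is $\int_0^{2h}\phi(t)\,\mu^\ast(dt)=\frac{1}{2+C^\ast}\bigl(\phi(0)+C^\ast\phi(h)+\phi(2h)\bigr)$, which collapses to the common value $\phi(0)=\phi(h)=\phi(2h)$.

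Next I would reduce the minimization to the interval $[0,h]$. Using evenness of $\Gamma$ one checks the reflection identity $\phi(2h-u)=\phi(u)$, so $\phi$ is symmetric about $u=h$; and on $[0,h]$ we have $\phi(u)=\frac{1}{2+C^\ast}\gamma(u)$ with $\gamma(t)=\Gamma(t)+C^\ast\Gamma(h-t)+\Gamma(2h-t)$ as in Assumption~\ref{assumption:for-second-case}. That assumption says $\gamma$ has at most one critical point on $(0,h)$ and it is a maximum, so $\gamma$ — hence $\phi$ — attains its minimum over $[0,h]$ at an endpoint, and by the reflection symmetry the same is true over $[h,2h]$. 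Therefore $\min_{[0,2h]}\phi=\min\{\phi(0),\phi(h)\}$.

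The heart of the matter is then the identity $\phi(0)=\phi(h)$, equivalently $\gamma(0)=\gamma(h)$ — this is exactly the equation that the constant $C^\ast$ is rigged to solve. Computing $\gamma(0)=\Gamma(0)+C^\ast\Gamma(h)+\Gamma(2h)$ and $\gamma(h)=2\Gamma(h)+C^\ast\Gamma(0)$, their difference is
\begin{equation*}
\gamma(0)-\gamma(h)=\bigl(\Gamma(0)-2\Gamma(h)+\Gamma(2h)\bigr)+C^\ast\bigl(\Gamma(h)-\Gamma(0)\bigr),
\end{equation*}
and substituting $C^\ast=\dfrac{2\Gamma(h)-\Gamma(0)-\Gamma(2h)}{\Gamma(h)-\Gamma(0)}$ (the given formula for $C^\ast$ put over a common denominator) makes this vanish. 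Here $\Gamma(0)\neq\Gamma(h)$ — needed for $C^\ast$ to be defined — because $\Gamma(0)-\Gamma(h)=\tfrac12\operatorname{Var}\bigl(X_Y(h)-X_Y(0)\bigr)>0$ for a non-degenerate increment. With $\gamma(0)=\gamma(h)$ in hand, we conclude $\phi(0)=\phi(h)=\phi(2h)=\min_{[0,2h]}\phi$, the double integral above equals this same value, and Theorem~\ref{theorem:check} yields that $\mu^\ast$ is optimal.

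I expect the only real obstacle to be bookkeeping: getting the reflection symmetry $\phi(2h-u)=\phi(u)$ right and verifying the $C^\ast$ identity cleanly; the substantive analytic input — that $\gamma$ has no interior minimum on $(0,h)$ — is handed to us by Assumption~\ref{assumption:for-second-case}. In fact, once $\gamma(0)=\gamma(h)$ is known, Rolle's theorem already forces a critical point in $(0,h)$, so that assumption amounts to saying this critical point is unique and is a maximum, i.e.\ $\gamma$ rises then falls on $[0,h]$.
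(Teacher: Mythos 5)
Your proposal is correct and follows essentially the same route as the paper: reduce by stationarity to $[0,2h]$, compute $\phi(u)=\frac{1}{2+C^\ast}\bigl(\Gamma(u)+C^\ast\Gamma(h-u)+\Gamma(2h-u)\bigr)$, invoke Assumption \ref{assumption:for-second-case} together with the reflection symmetry $\phi(2h-u)=\phi(u)$ to rule out interior minima, and verify $\phi(0)=\phi(h)=\phi(2h)$ from the definition of $C^\ast$. Your added remarks (that the double-integral condition follows automatically from the equality of $\phi$ at the atoms, and that $\Gamma(0)\neq\Gamma(h)$ so $C^\ast$ is well defined) are details the paper leaves implicit, but the argument is the same.
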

\begin{proof}
By Assumption \ref{assumption:for-first case}, $X_Y$ is a stationary process. Therefore without loss of generality we may work on the interval $[0,2h]$. Recall that by Theorem \ref{theorem:check} we only have to verify that
\begin{equation*}
    \min_{u\in [0,2h]} \int_0^b R(s,u) \mu^\ast(ds)=\int_0^b\int_0^b R(s,t) \mu^\ast(ds) \mu^\ast(dt),
\end{equation*}
and 
\begin{equation*}
    \min_{u\in [0,2h]} \int_0^b R(s,u) \mu^\ast(ds)=\int_0^b R(s,t) \mu^\ast(ds)
\end{equation*}
for $t\in \{0,h,2h\}$. We use the definition of $\mu^\ast$ to get that 
\begin{equation*}
    \phi(t):=\int_0^b R(s,u) \mu^\ast(ds)=\frac{1}{2+C^\ast}\left(\Gamma(t)+C^\ast \Gamma(h-t)+\Gamma(2h-t)\right).
\end{equation*}
By assumption, we can verify that $\phi(t)$ has no minima on $(0,h)$. Thus by symmetry $\phi$ can have at most one extrema on the interval $(h,2h)$ which would also be a maxima, as well. Therefore there are no minima in the interval $(h,2h)$ either. Then we just need to check that 
\begin{equation*}
    \phi(0)=\phi(h)=\phi(2h).
\end{equation*}
Again, as $\phi(t)=\phi(2h-t)$ we only need to compute $\phi(0)$ and $\phi(h)$. They are
\begin{equation*}
    \phi(0)=\frac{1}{2+C^\ast}\left(\Gamma(0)+C^\ast \Gamma(h)+\Gamma(2h)\right)
\end{equation*}
and 
\begin{equation*}
    \phi(h)=\frac{1}{2+C^\ast}\left(\Gamma(h)+C^\ast \Gamma(0)+\Gamma(h)\right).
\end{equation*}
By definition of $C^\ast$, we have that 
\begin{equation*}
    \phi(0)-\phi(h)=\frac{1}{2+C^\ast}\left(\Gamma(0)+\Gamma(2h)-2\Gamma(h)+C^\ast(\Gamma(h)-\Gamma(0)\right)=0
\end{equation*}
\end{proof}

\begin{remark}
The assumption that $\gamma(t)$ has no minima on $(0,h)$ in Theorem \ref{theorem:second-case} can be verified numerically easily. It is however difficult to verify in generality. See the figures at the end of the article for numerical verification. 
\end{remark}

\section{Fractional Brownian Motion and Fractional Gaussian Noise}\label{section:Examples}
In this section we give examples of Gaussian processes that satisfy Assumptions \ref{assumption:for-first case} and \ref{assumption:for-second-case}.
\begin{example}
An example of a continuous centered Gaussian process with nonnegatively correlated increments is fractional Brownian motion, $B_H$ with Hurst index $H\geq 1/2$. Fractional Brownian motion is a process whose covariance function is
\begin{equation*}
    R(s,t)=\frac{1}{2}\left(t^{2H}+s^{2H}-|t-s|^{2H}\right) ,
\end{equation*}
and variance function is $V(t)=|t|^{2H}$. When $H=1/2$, one recovers standard Brownian motion. It is well known that fractional Brownian motion has stationary nonnegatively correlated increments for $H\geq 1/2$ - see e.g. \cite{Nourdin-fBm}. Therefore Proposition \ref{proposition:non-increment} applies and the optimal measure for $B_H$ on the interval $[a,b]$ is $\mu^\ast=\delta_a$. Therefore the rate function for the optimization problem $\sigma_\ast^2(a,b)$ is 
\begin{equation*}
   \lim_{u\to \infty} \frac{1}{u^2}\log \left(P\left(\min_{t\in [a,b]} B_H(t)>u\right)\right)= -\frac{1}{2a^{2H}}.
\end{equation*}
\end{example}
\begin{example}
The increment of fractional Gaussian Brownian motion is called fractional Gaussian noise. The covariance function for fractional Gaussian noise is 
\begin{equation*}
    \Gamma(t-s)=\frac{1}{2}\left( |t-s-h|^{2H}-2|t-s|^{2H}+|t-s+h|^{2H} \right),
\end{equation*}
and increment function
\begin{equation*}
    f_Y^h(t)=|t+h|^{2H}-|t|^{2H}, 
\end{equation*}
whose first derivative is
\begin{equation*}
    (f_Y^h)'(t)=2H\left(\frac{t+h}{|t+h|^{2-2H}}-\frac{t}{|t|^{2-2H}}\right)
\end{equation*}
and second derivative is
\begin{equation*}
    (f_Y^h)''(t)=2H(2H-1)\left(\frac{(t+h)^{2}}{|t+h|^{4-2H}}-\frac{t^2}{|t|^{4-2H}}\right)
\end{equation*}
Assumptions \ref{assumption:for-first case} and \ref{assumption:for-second-case} can be verified numerically in the case of fractional Gaussian noise, as the following figures will illustrate. In this case, we have that
\begin{equation*}
    \sigma_\ast^2(a,b)=\frac{1}{C^\ast+2}\left(\Gamma(0)+C^\ast \Gamma(h)+\Gamma(2h)\right),
\end{equation*}
and
\begin{equation*}
     \lim_{u\to \infty} \frac{1}{u^2}\log \left(P\left(\min_{t\in [a,b]} B_H(t+h)-B_H(t)>u\right)\right)= -\frac{C^\ast+2}{2(\Gamma(0)+C^\ast \Gamma(h)+\Gamma(2h))}.
\end{equation*}
\end{example}
\section{Further Questions}
We conclude the paper by asking some questions for possible future directions of research. 
\begin{question}
Can Proposition \ref{proposition:converse-1} be strengthened by adding additional constraints on $X$, for example if $X$ has stationary increments?
\end{question}
\begin{question}
Can the assumption that $\gamma(t)$ has no minima on $(0,h)$ in Theorem \ref{theorem:second-case} be verified in generality under Assumption \ref{assumption:for-first case}?
\end{question}
\begin{question}
Does the last line of Assumption \ref{assumption:for-first case} hold for general Gaussian processes with stationary nonnegatively correlated increments?
\end{question}
\begin{question}
What is the optimal measure for the increment of processes $Y$ satisfying Assumption \ref{assumption:for-first case} on intervals $(0,nh)$ for $n\in \{3,4,...\}$?
\end{question}
\begin{question}
What is the optimal measure for the increment of processes $Y$ satisfying Assumption \ref{assumption:for-first case} on intervals $(0,b)$ for $b\neq nh$ and $b>h$?
\end{question}
\FloatBarrier
\begin{figure}[p]
    \centering
    \includegraphics[width=90mm]{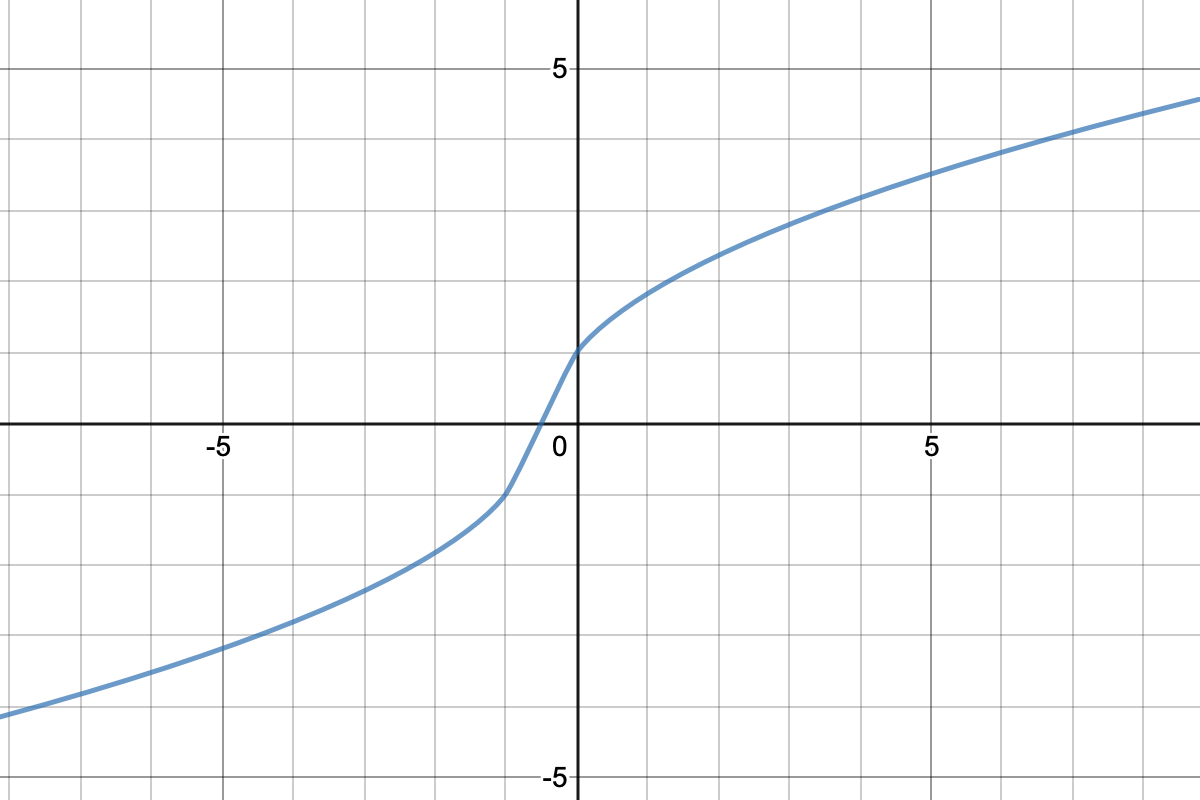}
    \caption{The increment process $f_Y^h$ for fBm with $2H=1.5$, $h=1$}
\end{figure}
\begin{figure}[p]
    \centering
    \includegraphics[width=90mm]{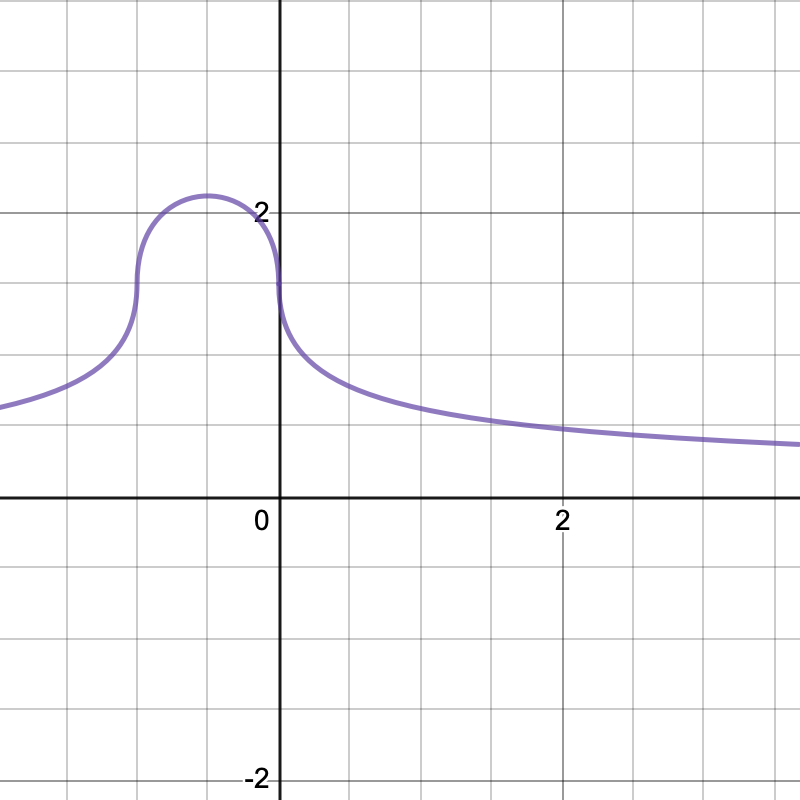}
    \caption{The first derivative of the increment process $f_Y^h$ for fBm with $2H=1.5$, $h=1$}
\end{figure}
\begin{figure}[p]
    \centering
    \includegraphics[width=90mm]{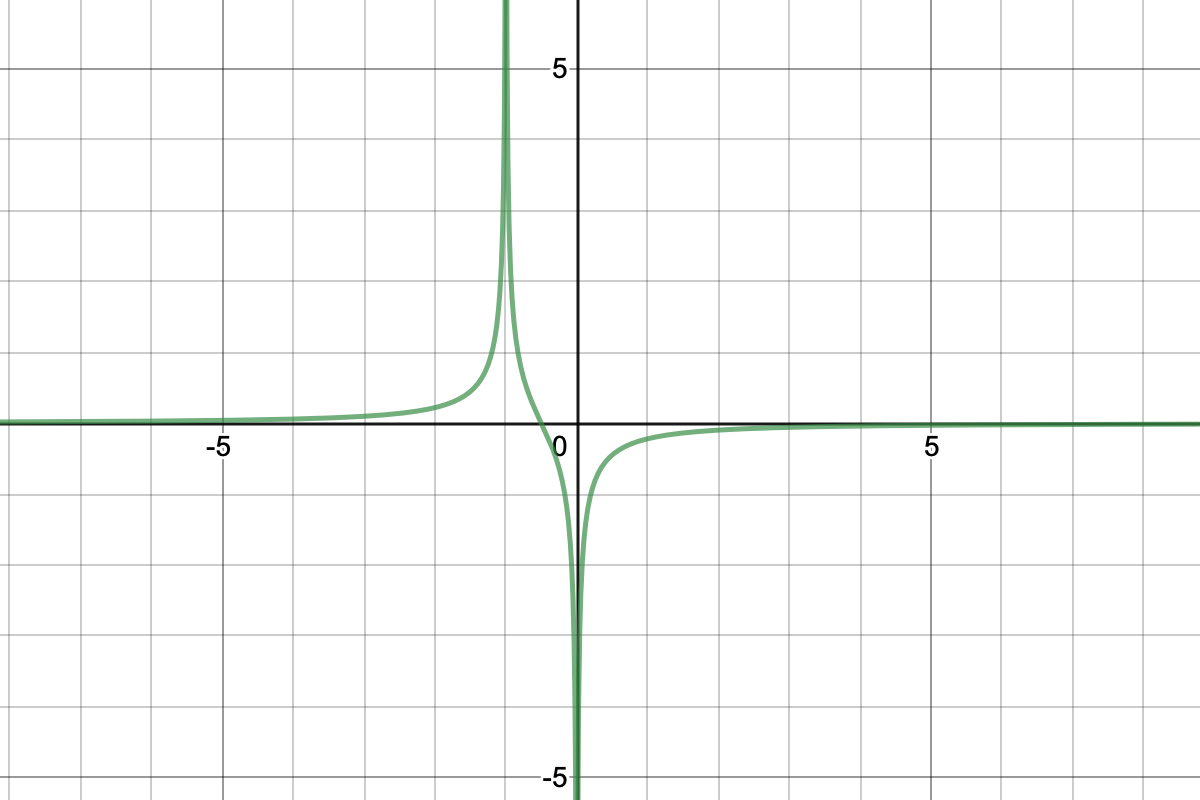}
    \caption{The second derivative of the increment process $f_Y^h$ for fBm with $2H=1.5$, $h=1$}
\end{figure}
\begin{figure}[p]
    \centering
    \includegraphics[width=90mm]{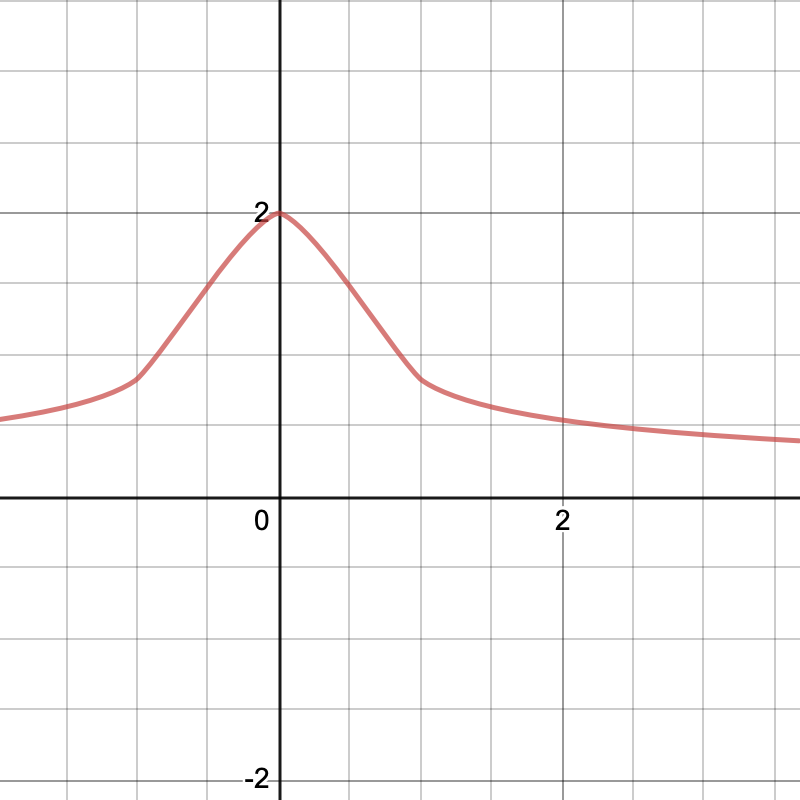}
    \caption{The covariance function $\Gamma$ for fBm with $2H=1.5$, $h=1$}
\end{figure}
\begin{figure}[p]
    \centering
    \includegraphics[width=90mm]{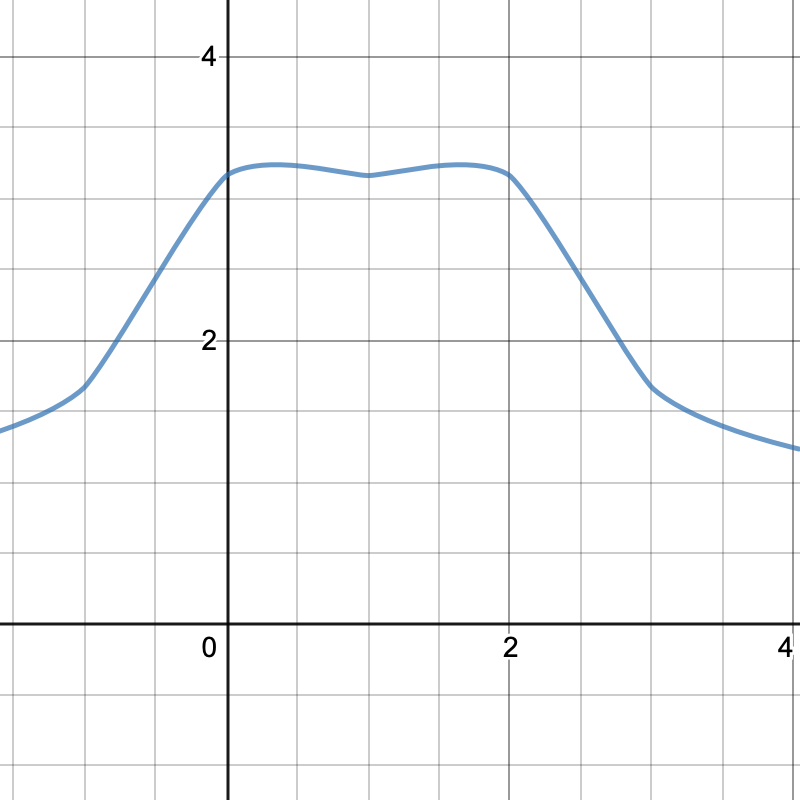}
    \caption{The function $\gamma$ for fBm with $2H=1.5$, $h=1$}
\end{figure}
\begin{figure}[p]
    \centering
    \includegraphics[width=90mm]{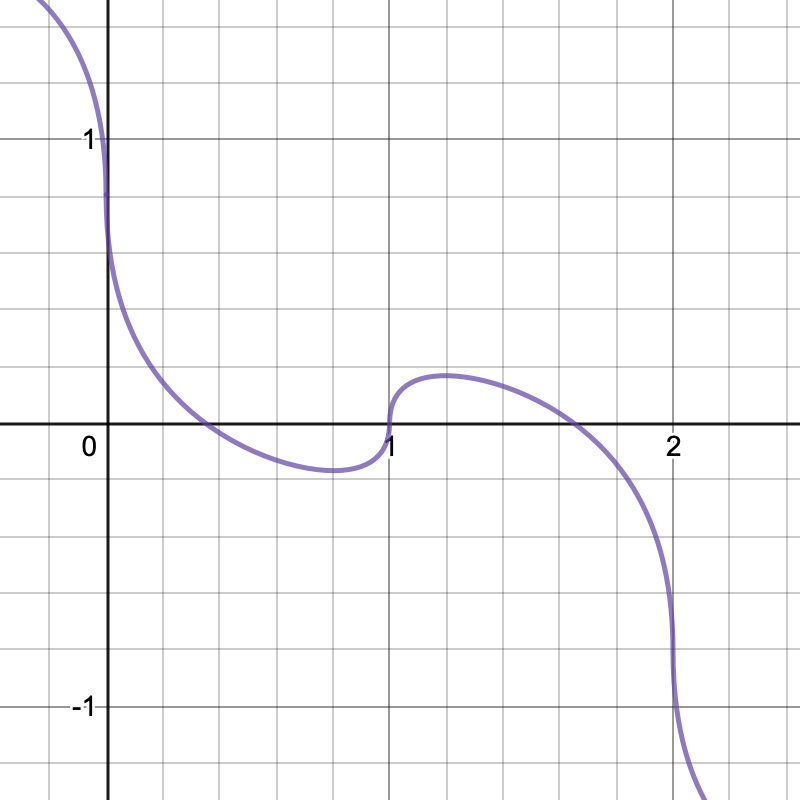}
    \caption{The derivative of the function $\gamma$ for fBm with $2H=1.5$, $h=1$. Notice how there is one critical point on $(0,h)=(0,1)$.}
\end{figure}

\FloatBarrier

\newpage
\bibliographystyle{plain}
\bibliography{reference.bib}

\end{document}